%%%%%%%%%%%%%%%%%%%%%%%%%%%%%%%%%%%%%%%%%%%%%%%%%%%%%%%%%%%%%%%%%%%%%%%%%%%%%%%%
%2345678901234567890123456789012345678901234567890123456789012345678901234567890
%        1         2         3         4         5         6         7         8

\documentclass[letterpaper, 10 pt, conference]{ieeeconf}  % Comment this line out if you need a4paper

\IEEEoverridecommandlockouts                              % This command is only needed if 
                                                          % you want to use the \thanks command
\newcommand{\cR}{\mathbb{R}}

\overrideIEEEmargins    

\def \ds {\displaystyle}
                                  % Needed to meet printer requirements.

\def \U {{\mathcal  U}}

\newtheorem{theorem}{Theorem}[section]
\newtheorem{lemma}[theorem]{Lemma}
\newtheorem{proposition}[theorem]{Proposition}
\newtheorem{corollary}[theorem]{Corollary}
\newtheorem{definition}[theorem]{Definition}

\newtheorem{remark}[theorem]{Remark}
\def\bel{\begin{equation}\label}
\def\eeq{\end{equation}}
\def\cS{\mathfrak{T}}

\def \Ba {\mathbb{B}}

\def \ds{\displaystyle}
\def\vsm{\vskip0.3truecm}

\def \va{\nu}

%In case you encounter the following error:
%Error 1010 The PDF file may be corrupt (unable to open PDF file) OR
%Error 1000 An error occurred while parsing a contents stream. Unable to analyze the PDF file.
%This is a known problem with pdfLaTeX conversion filter. The file cannot be opened with acrobat reader
%Please use one of the alternatives below to circumvent this error by uncommenting one or the other
%\pdfobjcompresslevel=0
%\pdfminorversion=4

% See the \addtolength command later in the file to balance the column lengths
% on the last page of the document

% The following packages can be found on http:\\www.ctan.org
%\usepackage{graphics} % for pdf, bitmapped graphics files
%\usepackage{epsfig} % for postscript graphics files
%\usepackage{mathptmx} % assumes new font selection scheme installed
%\usepackage{times} % assumes new font selection scheme installed
 \usepackage{amsmath} % assumes amsmath package installed
 \usepackage{amssymb}  % assumes amsmath package installed
 \usepackage[usenames, dvipsnames]{color}

\title{\LARGE \bf
Necessary conditions involving Lie brackets \\for impulsive optimal control problems*
}

\author{M. Soledad Aronna$^{1}$, Monica Motta$^{2}$ and Franco Rampazzo$^{3}$% <-this % stops a space
 \thanks{*This research is partially supported by the  Padua University grant SID 2018 ``Controllability, stabilizability and infimun gaps for control systems'', prot. BIRD 187147; by the ``National Group for Mathematical Analysis, Probability and their Applications"  (GNAMPA-INdAM) (Italy);    by the E.U. under the 7th Framework Program  - Grant agreement SADCO,  by CAPES, CNPq, and FAPERJ (Brazil) through the ``Jovem Cientista de Nosso Estado'' Program and by the von Humboldt Foundation (Germany).  }% <-this % stops a space
\thanks{$^{1}$M.S. Aronna,  Escola de Matem\'atica Aplicada,  
 Funda\c c\~ ao 
Getulio Vargas,  Rio de Janeiro 22250-900, Brazil
        {\tt\small soledad.aronna@fgv.br}}%}
\thanks{$^{2}$M. Motta, Dipartimento di Matematica,
Universit\`a di Padova,  Padova  35121, Italy
        {\tt\small motta@math.unipd.it}}%
        \thanks{$^{3}$F. Rampazzo, Dipartimento di Matematica,
Universit\`a di Padova,  Padova  35121, Italy
        {\tt\small rampazzo@math.unipd.it}}%
 }

\begin{document}

\maketitle

\thispagestyle{empty}

\pagestyle{empty}

%%%%%%%%%%%%%%%%%%%%%%%%%%%%%%%%%%%%%%%%%%%%%%%%%%%%%%%%%%%%%%%%%%%%%%%%%%%%%%%%
\begin{abstract}
We obtain higher order  necessary conditions for a minimum of a Mayer optimal control problem connected with a nonlinear, control-affine system, where the controls range on an m-dimensional Euclidean space. Since the allowed velocities are unbounded and the absence of  coercivity assumptions makes big speeds quite likely, minimizing sequences happen to converge toward “impulsive”, namely discontinuous, trajectories. As is known, a distributional approach does not make sense in such a nonlinear setting, where instead a suitable embedding in the graph space is needed. We will illustrate how the chance of using impulse  perturbations makes it possible to derive a Higher Order Maximum Principle which includes both the usual needle variations (in space-time) and  conditions involving iterated Lie brackets. An example, where  a third  order necessary condition  rules out the optimality of a given extremal,  concludes the paper.
\end{abstract}

%%%%%%%%%%%%%%%%%%%%%%%%%%%%%%%%%%%%%%%%%%%%%%%%%%%%%%%%%%%%%%%%%%%%%%%%%%%%%%%%
\section{INTRODUCTION}

In this paper we aim to investigate necessary optimality conditions for an optimal process of the following  minimum problem:
\begin{equation*}\label{intro1}
(P)
\left\{
\begin{split}
&\qquad\mbox{Minimize } \Psi(T, x(T))
\\ &
\ds\mbox{over the set of processes} \ (T,u,x) \   \mbox{satisfying }  \\ 
&\displaystyle\frac{dx}{dt} \,=\, f(x) + \sum_{i=1}^{m}g_{i}(x)u^i,    \\
&x(0)=\check x, \quad \big(T, x(T)\big) \in \cS.
\end{split}
\right.
\end{equation*}
 The {\it target } $\cS$ is given by
 \begin{multline*}
 \cS:=\{(t,x):  \ \varphi_i(t,x)\le 0, \ \psi_j(t,x)=0,   \\
   i=1,\dots, r_1, \ j=1,\dots,r_2\}.
  \end{multline*}
 where $\varphi_i,\, \psi_j:\cR\times\cR^{n}\to\cR$ are functions  of class $C^1$. 
We assume  that the state variable ranges over $\cR^n$ and that the control $u$ takes values in   $\cR^m$: in particular, $u$ is  allowed to be { \it unbounded}.  The cost function  $\Psi:\cR\times\cR^n  \to\cR$ is assumed of class $C^1$, while $f$, $g_i:\cR^n\to\cR^n$   are  the vector fields   of class $C^\infty$. However,  we refer to Remark \ref{Rext} for  comments on the possibility of drastically reducing  these regularity hypotheses and replacing  the state space $\cR^n$ with a manifold,  as done in \cite{AMR19}.
 
 Let us point out  that  problem $(P)$ has an {\it impulsive} character, i.e. minimizing  sequences generally fail to converge to  an absolutely continuous path  and, in fact,  they may happen to approach  a discontinuous  path. 
It is well-known that,  because of the nonlinearity of the dynamics,   a   measure-theoretical approach, with $u$ to be interpreted as a Radon measure, does not verify basic well-posedness conditions  \cite{Haj85}. Different but substantially equivalent approaches take care of this crucial point (see, among others,   \cite{BR88,MR95, MiRu03}).  We choose here to adopt the so-called  {\em graph-completion} point of view and  embed the original problem into the  {\it space-time} problem  ($P^{e}$)    below:   the extended state variable is now $(y^0,y):=(t,x)$, and the extended trajectories are $(t,x)$-paths  which are (reparameterized) $C^0$-limits of graphs of the original trajectories.

 More precisely, we consider  the optimization problem
\begin{equation*}\label{extendedintro}
(P^{e})
\left\{
\begin{split}
& \qquad \mbox{Minimize } \Psi(y^0(S), y(S))\\
& \ds\mbox{over the set of processes} \, (S,w^0,w,y^0,y) \,  \mbox{verifying }  \\ 
& \ds \frac{d{y^0}}{ds}  = w^0,\\ 
&\ds \frac{dy}{ds} = f(y)w^0+ \sum_{i=1}^{m}g_{i}( y)w^i, \\
&(y^0,y)(0)={(0,\check x)}, \qquad (y^0(S), y(S)) \in \cS,
\end{split}\right. 
\end{equation*}
where the  controls $(w^0,w)$ are functions from a pseudo-time interval $[0,S]$ into the set 
 \bel{DS+}
\mathcal{W} :=\{(w^0,w)\in\cR  \times \cR^m: \ w^0\ge0, \   w^0+|w|=1\} .
\eeq
 Notice that, unlike the controls $u$, the control pairs $(w^0,w)$ are now bounded. A process $(T,u,x)$ of the original system is identified with a process $(S,,w^0,w, y^0,y)$ of the space-time system through the reparameterization
 \begin{equation}\label{sigma}
 \begin{array}{l}
 \sigma(t):=\int_0^t (1+|u(\tau)|)\,d\tau, \quad  y^0:=\sigma^{-1}:[0,S]\to[0,T], \\ [1.5ex]
\ds w^0(s):=(1+|u( y^0(s))|)^{-1},  \quad  w(s):=w^0(s)\,u(y^0(s)),\\ [1.5ex]
 y(s):=x(y^0(s)),
 \end{array}
\end{equation}
while the actual {\it impulsive} processes --namely the ones that are not reparameterizations of original processes--  are the five-tuples  $(S,w^0,w, y^0,y)$  with $w^0=0$ on some  non trivial subinterval
$[s_1,s_2]\subseteq [0,S]$.   Unlike the original problem  $(P)$, the extended problem often admits   an optimal
process (provided the target can be reached by one trajectory), so that it is natural to look for necessary conditions for the extended problem $(P^e)$. 

The first part of the paper (Section \ref{SecProblem}) is devoted to the  consistency of minimum problems $(P)$ and $(P^e)$, in their local version. Actually, it turns out that a process  $(\bar T,\bar u,\bar x)$ of the original system is locally optimal  for the original problem $(P)$ if and only if its space-time representation $(\bar S,\bar w^0,\bar w,\bar  y^0,\bar y)$ is locally optimal for the restriction of the  extended problem $(P^e)$ to processes  $(S,w^0,w,y^0,y)$ verifying $w^0>0$ almost everywhere.
 
In Theorem \ref {PMPeho} we state an abridged version of a  Higher Order Maximum Principle for $(P^e)$, where   necessary optimality conditions   involving iterated Lie brackets of the non-drift vector fields $\{g_1,\ldots,g_m\}$ are presented.   In particular, this result generalizes to impulsive trajectories a result that, for minimum time problems, has been established  for absolutely  continuous processes with unbounded controls (see \cite{ChiSt16}).  The detailed proof  of the  result's main point is rather  long and technical, and is provided --under much weaker regularity hypotheses--- in \cite{AMR19}. Instead, here we just give   some hints of the idea  lying behind the stated higher order conditions. Further relations, which involve the drift $f$, are derived in Corollary \ref{COR}.
 The paper concludes with an Example, in Section  \ref{SectionExample},  where the optimality of a space-time process  verifying the standard maximum principle  is ruled out by our higher order  conditions.

Several papers, an incomplete list of which  includes \cite{SV97, PS00, MiRu03}, deal with  First Order Maximum Principles for impulsive systems. As for  {\it higher-order}  necessary  conditions ---see e.g.  \cite{bonnans, FR, knobloch, krener, sussmann, SLed12} for the bounded control case---  we are aware only of results  for the  {\it  commutative case}, i.e. when $[g_i,g_j]\equiv0$ for all  $i,j=1,\ldots,m$, and up to the  second order (see e.g.  \cite{AKPC18,ADP05,D94}).  Instead, our Higher Order Maximum Principle is established for the generic, non-commutative, case, and  involves iterated brackets of any order.

\subsection{Notation and definitions}
Let $N$ be a natural number. For every $i\in\{1,\dots,N\}$, we write ${\bf e}_i$  for the $i$-th element of the  canonical basis of $\cR^N,$  
$\Ba_N({\check x})$ for the closed ball $\{x\in \cR^N : |x-{\check x}| \leq 1\}$,  and $\Ba_N$ when ${\check x}=0.$ $\partial \Ba_N:=\{x \in\cR^N: \  |x|=1\}$.   A subset $K\subseteq {\cR^N}$  is called  {\it cone} if $\alpha x\in K$ whenever $\alpha>0$, $x\in K$. 
Given a real interval $I$ and $X\subseteq \cR^N,$ we write $AC(I,X)$ for the space of absolutely continuous functions, $C^0(I,X)$ for the space of continuous functions,   $L^1(I,X)$  for the space of $L^1$-functions, and $L^\infty(I,X)$   for space of measurable, bounded functions, respectively, defined on $I$ with values in $X.$ As customary, we shall use  $\|\cdot\|_\infty$, $\|\cdot\|_1$ to denote the sup-norm and the $L^1$-norm, respectively, where  domain and codomain are omitted  when obvious.
 
  The Lie bracket of two vector fields $F_1,F_2$ is the vector field $[F_1,F_2]$ defined by
$$
[F_1,F_2](x):=DF_2(x)\cdot F_1(x) - DF_1(x)\cdot F_2(x),
$$
 where $D$ denotes differentiation.  By repeating the bracketing procedure  we obtain the so-called iterated brackets. 
 
  For a given  real interval $I$, let us consider the {\it $L^1$-norm operator }$\va:L^1(I,\cR^m)\to AC(I,[0,+\infty))$ defined by
\begin{equation}\label{var}
\va[u](t):=  \int_0^t|u(\tau)|\,d\tau, \qquad \text{for } t\in I.
\end{equation}

\section{THE OPTIMIZATION PROBLEM AND ITS EXTENSION}\label{SecProblem}
In this section we introduce the optimization problem over $L^1$ controls  and its embedding in an impulsive problem in detail. 
 
\subsection{The original optimal control problem}
We define the {\em set of strict sense controls} as
$$
\U:=
\bigcup_{T>0} \{T\}\times  L^1([0,T],\cR^m). 
$$

\begin{definition}
 For any $(T,u)\in\U$ we say that $(T,u, x)$ is a {\em strict-sense process} if  $x$ is  the unique Carath\'eodory solution to
 \bel{E2}
\left\{\begin{array}{l}
\ds \frac{dx}{dt}(t) = f(x(t)) + \ds\sum_{i=1}^m  g_i(x(t)) {u^i}(t)\\
 x(0)=\check x \end{array}\right.
\eeq corresponding to the control $u$ and defined on $[0,T]$. \footnote{Under our assumptions on the control system, for any strict-sense control $(T,u)$, there exists a unique solution of \eqref{E2} which is defined in general on a maximal interval of definition $[0,\tau)\subseteq [0,T]$.}
Furthermore, we say that a process $(T,u,x)$ is {\em  feasible} if it agrees with the final constraint, i.e. $(T, x(T))\in \cS$.  
 \end{definition}

Let us fix an integer $q$ and let us  define a distance by setting for all $\tau_1$, $\tau_2\in (0,+\infty)$ and  for any pair $(z_1,z_2)\in C^0([0,\tau_1],\cR^q)\times C^0(\tau_2 ([0,\tau_2],\cR^q)$,
 \begin{equation}\label{dinfty}
{\rm d}\big((\tau_1,z_1),(\tau_2,z_2)\big) :=  
 | \tau_1 - \tau_2|+  \|\tilde z_1- \tilde z_2\|_{{\infty}},
  \end{equation} 
 where, for every map  $z\in C^0([0,\tau],\cR^q)$ we have used $\tilde z$ to denote its continuous constant extension to $[0,+\infty)$. 
   
\begin{definition}\label{dmin} We say that a feasible  strict sense process $(\bar T, \bar u,  \bar x)$  is a {\em strict sense $L^{\infty}$-local minimizer} of $(P)$ if  there exists $\delta >0$ such that
\begin{equation}\label{min1}
\Psi(\bar T,\bar x(\bar T)) \,\leq\, \Psi(T, x(T))
\end{equation}
for every feasible strict sense process $(T, u, x)$ 
verifying
$$
{\rm d}\Big((T, x, \va[u]), (\bar T,  \bar x, \va[\bar u]) \Big)<\delta.
$$
 If  relation \eqref{min1} is satisfied for all admissible strict sense processes,  
 we say that $(\bar T, \bar u, \bar x)$ is a {\em global strict sense minimizer}.  
 \end{definition}

%%%%%%%%%%%%%%%%%%%%%%%%%%%%%%%%%%%%%%%
\subsection{The  space-time optimal control problem}\label{Subs3.2}
Define the {\em set of space-time controls}  
\bel{ACc}
\ds W:=
\bigcup_{S>0} \{S\}\times L^\infty([0,S],\mathcal{W}), 
%\bigcup_{S>0} \{S\}\times \Big\{(w^0,w)\in L^\infty([0,S],\mathcal{W})\Big\}, 
\eeq 
where $\mathcal{W}$ is as in  \eqref{DS+}.
\begin{definition}
 For any  space-time control $(S, w^0,w)\in W,$  we say that $(S,w^0,w,y^0,y)$ is a {\em space-time process} if  $(y^0,y)$ is  the unique Carath\'eodory solution to
 \begin{equation}
 \label{extended}
 \left\{
\begin{split}
\frac{d{y^0}}{ds}  &= w^0,   \\
\frac{dy}{ds} & = f( y)w^0+ \sum_{i=1}^{m}g_{i}( y)w^i, \ \  \mbox{ a.e. } s \in [0, S],\\
 (y^0,y &)(0)=(0,\check x),  
\end{split}
\right. \end{equation}
 corresponding to the control $(w^0,w)$ and defined on $[0,S]$. 
As before, we say that a space-time process $(S,w^0,w,y^0,y)$ is {\em  feasible} if $(y^0(S),y(S))\in \cS$. 
 \end{definition}
 \vsm
 \begin{definition}\label{emin} A feasible space-time process   $(\bar S, \bar w^0,\bar w, \bar y^0,\bar  y)$   is said to be an  {\em  $L^{\infty}$-local minimizer} for the space-time problem $(P^e)$ if 
  there exists $\delta >0 $ such that
\begin{equation}
\label{min}
\begin{array}{l} 
\ds\Psi\big((\bar y^0,\bar y)(\bar S)\big) \leq  \Psi\big((y^0,y)(S)\big) 
\end{array}
\end{equation}
 for all feasible space-time processes $(S,w^0,w,y^0,y)$ satisfying
\begin{equation}
\label{close}
{\rm d}\Big((y^0(S), y ,\va[w]) , ( \bar y^0(\bar S), \bar y, \va[\bar w])\Big)<\delta.
\end{equation}
If \eqref{min} is satisfied for all feasible space-time processes, 
we say that  
 $(\bar S, \bar w^0,\bar w,  \bar y^0,\bar  y)$ is a {\em global space-time  minimizer}.  
\end{definition}

\subsection{The space-time embedding}
 Next lemma, being an easy consequence of the chain rule,  shows that the space-time problem $(P^e)$ restricted to the controls $(w_0,w)$ with $w_0>0$ a.e.   is  an equivalent formulation of the original problem  $(P)$.  Since  every $L^1$-equivalence class contains  Borel measurable representatives,  here  and  in the sequel we  tacitly assume that all $L^1$ maps   are Borel measurable,  when necessary.
 
  \begin{lemma}\label{cres} (i) If  $(T,u,x)$ is a strict sense process, then 
  $$
  {\mathcal I}(T,u,x):= (S,w_0,w,y^0,y),
 $$
 where $(S,w_0,w,y^0,y)$ is defined as in \eqref{sigma}, 
  is a space-time process  with  $w_0>0$   a.e.  on $[0,S]$.
 
 (ii)  Vice-versa, if $(S,w_0,w,y^0,y)$ is a space-time process with $w_0>0$   a.e.  on $[0,S]$,  then
  $$
  {\mathcal I}^{-1}(S,w_0,w,y^0,y):=(T,u,x),
 $$
where
$$
\begin{array}{l}
 \sigma(t):=(y^0)^{-1}(t),  \ \ T:=y^0(S), \\ [1.5ex]
 x(t) := y(\sigma(t)),  \quad \text{for } t\in[0,T], \\ [1.5ex]
 u(t):=\ds\frac{w(\sigma(t))}{w_0(\sigma(t))} \  \  \text{a.e. } t\in[0,T].
\end{array}
$$
  is a strict sense process. 
  
Furthermore, in  both cases one has $$\Psi(T,x(T))=\Psi((y^0,y)(S)).$$
\end{lemma}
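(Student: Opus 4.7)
The plan is to verify the two directions separately by a direct application of the chain rule for absolutely continuous functions, noting that the only nontrivial step is the absolute continuity of the time--change in part (ii). Preservation of the cost is then a trivial by-product of the fact that the reparameterization is a bijection on end-points.

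For part (i), given a strict-sense process $(T,u,x)$, I would first observe that $\sigma(t):=\int_0^t (1+|u(\tau)|)\,d\tau$ is absolutely continuous on $[0,T]$ with $\sigma'(t)\ge 1$ a.e., hence $\sigma$ is a strictly increasing homeomorphism onto $[0,S]$, with $S:=\sigma(T)$. The inverse $y^0=\sigma^{-1}$ is $1$-Lipschitz, with $(y^0)'(s)=1/(1+|u(y^0(s))|)=w^0(s)$ almost everywhere. Setting $y=x\circ y^0$ and applying the chain rule then delivers the extended system \eqref{extended}. The algebraic identity $w^0+|w|=w^0(1+|u\circ y^0|)=1$ confirms that $(w^0,w)$ takes values in $\mathcal{W}$, and clearly $w^0>0$ a.e.

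For part (ii), the core difficulty lies in showing that $y^0$ is invertible with an absolutely continuous inverse. Strict monotonicity follows at once from $w^0>0$ a.e.: for any $s_1<s_2$, the integral $\int_{s_1}^{s_2}w^0\,d\tau$ is strictly positive, so $y^0$ is strictly increasing. The main obstacle is the absolute continuity of $\sigma:=(y^0)^{-1}$ on $[0,T]$, $T:=y^0(S)$, for which I would invoke the classical real-analysis fact that an absolutely continuous strictly increasing function $F$ admits an absolutely continuous inverse precisely when $F'>0$ a.e.\ --- a hypothesis we satisfy since $(y^0)'=w^0>0$ a.e. Once this is granted, $\sigma'(t)=1/w^0(\sigma(t))$ a.e.; because $y^0$ is Lipschitz and therefore enjoys the Luzin $N$-property, the negligible set $\{w^0=0\}\subseteq[0,S]$ is mapped by $y^0$ to a negligible subset of $[0,T]$, so that $u(t)=w(\sigma(t))/w^0(\sigma(t))$ is well defined a.e. The change of variables $t=y^0(s)$ yields $\int_0^T|u|\,dt=\int_0^S|w|\,ds\le S$, whence $u\in L^1([0,T],\cR^m)$. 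A second application of the chain rule then shows that $x:=y\circ\sigma$ is absolutely continuous and solves \eqref{E2}.

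Finally, cost preservation is immediate: in both constructions one has $T=y^0(S)$ and $x(T)=y(\sigma(T))=y(S)$, hence $\Psi(T,x(T))=\Psi(y^0(S),y(S))$. The whole argument is therefore a concatenation of chain-rule computations once the inversion step --- the only genuinely delicate ingredient --- is settled via the cited absolute-continuity-of-the-inverse criterion.
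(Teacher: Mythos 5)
Your argument is correct, and it follows exactly the route the paper intends: the paper gives no proof of Lemma \ref{cres}, dismissing it as ``an easy consequence of the chain rule,'' and your write-up supplies precisely that chain-rule computation. The one point that is genuinely nontrivial --- absolute continuity of $\sigma=(y^0)^{-1}$ in part (ii) --- you handle correctly via the Zareckii-type criterion that a strictly increasing absolutely continuous $F$ has an absolutely continuous inverse iff $F'>0$ a.e., together with the Luzin $N$-property of the Lipschitz map $y^0$ to see that $u$ is defined a.e.; this is exactly the detail the paper leaves to the reader.
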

 \vsm
Notice that the impulsive extension  consists in  allowing  subintervals  $I\subseteq [0,S]$  where $w_0\equiv0$. Then   the state $y$ evolves  on  $I$ in  zero  $t$-time, driven by  the non-drift  dynamics $\sum_{i=1}^{m}g_{i}(y(s))w^i(s)$. 
\vsm
\begin{remark} Let us point out that a reparameterization like the one utilized above is made possible by the fact that our localization of the problem implies that we are looking for minima among controls $u$ with  uniformly bounded $L^1$ norms. Of course, by relaxing this constraint,  larger classes of controls  can be considered. This, however, leads to the consideration of much more structured processes, in the direction  e.g. of  \cite{AR15, AMR15,MS18, BR94} or \cite{LQ}.
\end{remark}
\vsm
Actually,  the notion of  space-time  $L^{\infty}$-local minimizer  is consistent with the definition of strict-sense $L^{\infty}$-local minimizer, as stated in the  following result:

\begin{proposition}\label{Min=} A feasible strict sense process  $(\bar T, \bar u, \bar x)$  is a  strict sense $L^{\infty}$-local minimizer  for problem  $(P)$ if and only if   $(\bar S, \bar w^0,\bar w, \bar y^0, \bar y):={\mathcal I}(\bar T,\bar u,  \bar x)$
 is an $L^{\infty}$-local minimizer for problem $(P^e)$  among  the feasible space-time processes   $(S, w^0,w, y^0,y)$ with $w^0>0$ a.e..    Moreover,   
  $$
 \Psi((\bar y^0, \bar  y)(\bar S))=\Psi(\bar T,\bar x(\bar T)).
 $$
\end{proposition}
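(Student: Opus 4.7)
The strategy is to invoke the bijection $\mathcal{I}$ furnished by Lemma~\ref{cres}, which identifies strict-sense processes with space-time processes having $w^0 > 0$ a.e.\ and preserves the value of $\Psi$ at the endpoint (as well as feasibility). The proposition follows if I verify that $\mathcal{I}$ and $\mathcal{I}^{-1}$ are continuous with respect to the two distances: any strict-sense process close to $(\bar T, \bar u, \bar x)$ in the original distance~\eqref{dinfty} maps under $\mathcal{I}$ to a space-time process close to $\mathcal{I}(\bar T, \bar u, \bar x)$ in the extended distance, and vice versa. Granted this, the definition of local minimum transfers in both directions.

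For the ``if'' direction I apply $\mathcal{I}$: set $\sigma(t) := t + \nu[u](t)$, so $y^0 = \sigma^{-1}$, and note that $\sigma(t) - \bar\sigma(t) = \nu[u](t) - \nu[\bar u](t)$ controls $\|\widetilde{\sigma}-\widetilde{\bar\sigma}\|_{\infty}$ by the original distance. The crucial observation is that $\bar\sigma'(t) = 1 + |\bar u(t)| \ge 1$, so $\bar\sigma$ admits a unit Lipschitz lower bound, yielding
\[
|\bar y^0(s) - y^0(s)| \le |\bar\sigma(\bar y^0(s)) - \bar\sigma(y^0(s))| = |\sigma(y^0(s)) - \bar\sigma(y^0(s))| \le \|\widetilde{\sigma}-\widetilde{\bar\sigma}\|_{\infty}.
\]
Hence $y^0$ is uniformly close to $\bar y^0$; combining with $y(s) = x(y^0(s))$ and $\nu[w](s) = \nu[u](y^0(s))$, and invoking uniform continuity of $\bar x$ and $\nu[\bar u]$ on compact intervals, I deduce closeness of $(y, \nu[w])$ to $(\bar y, \nu[\bar w])$ in sup norm. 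For the ``only if'' direction I run the symmetric scheme with $\mathcal{I}^{-1}$: from $y^0(s) = s - \nu[w](s)$, extended closeness immediately gives $y^0$ uniformly close to $\bar y^0$. Since $\bar w^0 > 0$ a.e.\ makes $\bar y^0$ a strictly increasing continuous homeomorphism of $[0,\bar S]$ onto $[0,\bar T]$, its inverse $\bar\sigma$ is uniformly continuous on this compact interval, and the classical lemma on uniform convergence of inverses of continuous monotone functions yields $\sigma \to \bar\sigma$ uniformly. Closeness of $x = y \circ \sigma$ to $\bar x$ and of $\nu[u] = \nu[w] \circ \sigma$ to $\nu[\bar u]$ then follows from the uniform continuity of $\bar y$ and from $\nu[\bar w]$ being $1$-Lipschitz.

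The main obstacle is precisely this inverse step: the reparameterizations $\sigma$ need not be uniformly Lipschitz (since $w^0$ may approach $0$), so no direct linear estimate applies. Two dual mechanisms rescue the argument---the pointwise lower bound $\bar\sigma' \ge 1$ in the ``if'' direction, and the compactness-plus-homeomorphism property of $\bar y^0$ (granted by $\bar w^0 > 0$ a.e.) in the ``only if'' direction. Edge effects due to $T \ne \bar T$ or $S \ne \bar S$ are absorbed by the constant-extension convention built into~\eqref{dinfty}, and the equality $\Psi(\bar y^0(\bar S),\bar y(\bar S)) = \Psi(\bar T,\bar x(\bar T))$ in the final assertion is immediate from Lemma~\ref{cres}.
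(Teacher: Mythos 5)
Your proposal is correct and follows essentially the same route as the paper's proof: both reduce the claim to showing that $\mathcal{I}$ and $\mathcal{I}^{-1}$ carry ${\rm d}$-neighbourhoods into ${\rm d}$-neighbourhoods, exploiting the exact identity $\sigma-\bar\sigma=\va[u]-\va[\bar u]$ (equivalently $y^0-\bar y^0=\va[\bar w]-\va[w]$), the uniform continuity of $\bar\sigma=(\bar y^0)^{-1}$ in one direction, and the Lipschitz/uniform continuity of the reference trajectory in the other. The only (immaterial) differences are that you invoke the lower bound $\bar\sigma'\ge 1$ and the modulus of continuity of $\bar x$ where the paper instead compares at matched parameters $\bar s=\bar\sigma(y^0(s))$ and uses the Lipschitz constant of $\bar y$.
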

\begin{proof}  We have to prove the  following assertion:
\begin{itemize}
\item[(a)] there is  $\delta>0$ such that 
 $
\Psi(\bar T, \bar x(\bar T))\,\leq\, \Psi(T,  x(T))
 $
for all feasible  strict-sense processes  $(T, u, x)$ verifying  
\bel{close1App}
{\rm d}\Big((T, x,\va[u]),(\bar T,\bar x, \va[\bar u])\Big)
% = |T-\bar T|+\| (x,\va)-(\bar x,\bar \va) \|_\infty  
< \delta 
\eeq
if and only if
\item[(b)] there is  $\delta'>0$ such that 
$
\Psi((\bar y^0, \bar  y)(\bar S)) \leq  \Psi((y^0, y)(S))
$
 for all feasible space-time processes $(S, w^0,w, y^0, y)$  verifying  
 \begin{equation}
\label{close2App}
\begin{array}{l}
{\rm d}\Big(( y^0(S), y, \va[w]),( \bar y^0(\bar S),\bar  y,\va[\bar w])\Big)
%=  \\ [1.5ex] \qquad\quad  |S-\bar S|+\|(y^0, y,\va[w]) - (\bar y^0, \bar y ,\va[\bar w])\|_\infty 
< \delta'\,.
 \end{array}
\end{equation}
\end{itemize}
 
Let us show that  (a) $\Longrightarrow$ (b).  
%Let $(\bar T,\bar x,\bar \va, \bar u,\bar a)$  be a  regular $L^{\infty}$  local minimizer as in (a), let $(\bar S,\bar y^0, \bar y, \bar \nu, \bar\varphi^0,\bar\varphi,\bar\alpha)$ be the canonical representative in ${\mathcal I}(\bar T,\bar x,\bar \va, \bar u,\bar a)$ and
For any  space-time process $(S,w^0,w, y^0, y)$ with $w^0>0$ a.e., let us consider the strict-sense process $(T, u, x):={\mathcal I}^{-1}(S,w^0,w, y^0, y)$, as defined in Lemma \ref{cres}. In the rest of the proof we still denote $x$, $\bar x$, $\va[u]$, $\va[\bar u]$, $y$, $\bar y$,  $\va[w]$, and $\va[\bar w]$   the constant continuous  extensions of these functions
  to $[0,+\infty)$.  Instead,  the maps $y^0$, $\bar y^0$ are extended to  $[0,+\infty)$ by setting --we  do not rename-- 
  $$
  y^0:=(T+s-S)\chi_{(S,+\infty)}, \quad \bar y^0:=\bar (T+s-\bar S)\chi_{(\bar S,+\infty)}.
  $$
Let  $\sigma$, $\bar\sigma:[0,+\infty)\to[0,+\infty)$ be the continuous,  increasing, one-to-one maps given by $\sigma:=(y^0)^{-1}$, $\bar \sigma:=(\bar y^0)^{-1}$. For every $t\ge0$,  we set 
$$
s:=\sigma(t), \quad\bar s:=\bar\sigma(t), 
$$
so that $|s-\bar s|=|\sigma(t)-\bar\sigma(t)|$ (and $y^0(s)=t=\bar y^0(\bar s)$). Recalling  the definition of  ${\mathcal I}$,  we have
$$
\begin{array}{l}
|\bar T-T|+ |(\bar x,\va[\bar u])(t)- (x,\va[u])(t)|  \\ [1.5ex]
\qquad \quad =|\bar y^0(\bar S)-y^0(S)|+|(\bar y, \va[\bar w])(\bar s)-(y,\va[w])(s)|.
% \le \\
% 2\|y^0  -\bar y^0  \|+|S-\bar S|+ |Var_{[0,\bar s]}(\bar\varphi)-Var_{[0,  s]}( \varphi)|+|\bar y(\bar s))-y(s)|} +\left|Var( \bar \varphi)-Var(\varphi) \right|. 
\end{array}
$$
Moreover, 
$$
\begin{array}{l}
\va[w](s)-\va[\bar w](\bar s)= s-\bar s+ \bar y^0(\bar s)-y^0(s)=s-\bar s
\end{array}
$$
and   the Lipschitz continuity of $\bar y$  also implies that
\bel{dtr}
|\bar y(\bar s)-y(s)|\le L|\bar s-s|+ |\bar y(s)-y(s)| 
\eeq
for some $L>0$.  Let us set $\bar t:=\bar y^0(s)$, so that $\bar\sigma(\bar t)=s=\sigma(t)$. Then
 \begin{equation*}
 \begin{split}
  |s-\bar s| &=|\sigma(t)-\bar\sigma(t)|=|\bar\sigma(\bar t)-\bar\sigma(t)|\le\omega_{\bar\sigma}(|\bar t-t|) \\ 
&= \omega_{\bar\sigma}(|\bar y^0(s)-y^0(s)|)= \omega_{\bar\sigma}( |\va[\bar w](s)-\va[w](s)|),
  \end{split}
  \end{equation*}
where $\omega_{\bar\sigma}$ is the modulus of continuity of $\bar\sigma$, which exists since $\bar\sigma=(\bar\varphi^0)^{-1}$ is absolutely continuous and thus uniformly continuous. In conclusion, we obtain
\begin{equation*}
\begin{split}
|\bar T-T|+ |(\bar x,\va[\bar u])&(t)- (x,\va[u])(t)|  \\
&\le |\bar y^0(\bar S)-y^0(S)|+ \|\bar y-y\|_\infty   \\ 
& \quad\quad+(1+L) \omega_{\bar\sigma}(\| \va[\bar w] -\va[w] \|_\infty),
\end{split}
\end{equation*}
 which implies assertion (b), as soon as we choose $\delta'>0$ verifying $2\delta'+(1+L)\omega_{\bar\sigma}(\delta')<\delta$. Indeed, we obtain
$$
\begin{array}{l}
\Psi((\bar y^0, \bar  y)(\bar S) )=\Psi(\bar T, \bar x(\bar T)) \\ [1.5ex]
\qquad\qquad\qquad \leq\Psi(T,  x(T))=\Psi((y^0,y)(S)),
 \end{array}
$$ 
  for all feasible space-time processes  $(S, w^0,w, y^0, y)$ with $w^0>0$ a.e.  and   satisfying \eqref{close2App}  for such $\delta'$. 

\vsm
Let us now prove that (b) $\Longrightarrow$ (a).   For  any feasible  strict sense process  $(T, u, x)$ let us set
$(S,w^0,w, y^0, y):={\mathcal I}(T,u, x)$. Once again, we consider the functions extended to $[0,+\infty)$ as described above.
 
For any $s\ge0$, let us set  $t:=y^0(s)$ and    $\bar s:=(\bar y^0)^{-1}(t)$.  
We define  $\sigma :=(y^0)^{-1}$ and $\bar\sigma:=(\bar y^0)^{-1}$. Then, using   $L>0$ to  denote the Lipschitz constant of $\bar y$,   we get
$$
\begin{array}{l}
 |\bar y^0(\bar S)-y^0(S)|+|(\bar y, \va[\bar w])(s)-(y,\va[w])(s)| \\ [1.5ex]
 \qquad \le|\bar T-T|+|(\bar y, \va[\bar w])(s)- (\bar y, \va[\bar w])(\bar s)|  \\ [1.5ex]
\qquad\qquad\qquad\qquad + |(\bar y, \va[\bar w])(\bar s)-(y,\va[w])(s)|  \\ [1.5ex]
%|\bar T-T|+|(\bar x,\va[\bar u])\circ \bar y^0( s)-(x,\va[u])\circ y^0(s)|\le \\ [1.5ex]
\le |\bar T-T|+(1+L)(| \va[\bar u](t)-\va[ u](t)|)+ |\bar x(t)-  x(t)| ,
\end{array}
$$
where the last inequality holds,  because
$$
 |\bar s-s|=|\bar\sigma(t)-\sigma(t)|= |\va[\bar u](t)-\va[ u](t)|.
$$
%so that 
%$$
%|y(s)-\bar y(s)|\le |y(s)-\bar y(\bar s)|+|\bar y(\bar s)-\bar y(s)|\le |x(t)-\bar x(t)|+L|\bar s-s|\le |x(t)-\bar x(t)|+L(|\bar t_1-t_1|+| \bar v(t)-v(t)|).
%$$
At this point, we derive assertion (a) as soon as we choose $\delta$ such that $2\delta+(1+L)\delta<\delta'$, since
$$
\begin{array}{l}
\Psi(\bar T, \bar x(\bar T))=\Psi((\bar y^0, \bar  y)(\bar S) )  \\
\quad\qquad\qquad\le \Psi((y^0,y)(S))=\Psi(T,  x(T)),
\end{array}
$$ 
  for all strict sense feasible processes  verifying \eqref{close1App}  for such $\delta$. 
  \end{proof}
%%%%%%%%%%%%%%%%%%%%%%%%%%%%%%%%%% 
%%%%%%%%%%%%%%%%%%%%%%%%%%%%%%%%%%% 

 \section{A HIGHER ORDER MAXIMUM PRINCIPLE}\label{SNC}
Let us consider the  {\it unmaximized Hamiltonian}
$$
H(x,p_0,p,\lambda,w^0,w):= p_0w^0 + p\cdot\Big(f(x) w^0 +  \sum_{i=1}^{m}  g_{i}(x) w^i\Big)
$$
 and the  {\em  Hamiltonian} ${\bf H}: \cR^n\times \cR\times\cR^n\times\cR\to \cR$ defined by 
$${\bf H}(x,p_0,p, \lambda) := \ds\max_{(w^0,w)\in \mathcal{W}}  H(x,p_0,p ,\lambda,w^0,w ),
$$
where  $ \mathcal{W}$ is as in \eqref{DS+}. 
For any continuous vector field $F:\cR^n\to\cR^n$, let us introduce the classical {\it $F$-Hamiltonian}
  $$
  {\bf H}_F(x,p):=p\cdot F(x) \qquad\text{for } (x,p)\in\cR^n\times\cR^n.
  $$
   Furthermore, let us define the 
 the {\it polar cone} of $\cS$ at a point  $(T,x)\in \cS$ as the set
 % \mm{servono le sommatorie?}
 \begin{multline*}
  N_{_{(T,x)}}\cS:=
\ds\text{span}_+\Big\{D\varphi_\ell(T,x): \,\,\,\ell\in  I(T,x)\Big\} \\ 
+ \text{span}\Big\{D\psi_j(T,x):\,\,\, j=1\,\ldots,r_2 \Big\},
  \end{multline*}
%  $$
% N_{_{(T,x)}}\cS:=
% \sum_{\ell\in I(T,x)} \text{span}_+D\varphi_\ell(T,x)  +\sum_{j=1}^{r_2}\text{span}D\psi_j(T,x),
%  $$
  where $I(T,x)\subseteq\{1,\ldots,r_1\}$ is the subsets  of indexes $\ell$ such that $\varphi_\ell(T,x) = 0$.

%As usual, for any $(T,x)\in\cS$, we define the {\it normal cone} to $\cS$ at $(T,x)$ as
%  $$
%  \begin{array}{l}
% N_{(T,x)}\cS:= \text{span}_+\{\nabla\varphi_1(T,x), \dots,\nabla\varphi_{r_1}(T,x)\} \\
% \qquad\qquad\qquad\qquad +\text{span}\{\nabla\psi_1(T,x), \dots,\nabla\psi_{r_2}(T,x)\}.
% \end{array}
%  $$
  
 \begin{theorem}\label{PMPeho}[{\sc Higher Order Maximum Principle}] Let  $(\bar S,\bar w^0,\bar w,  \bar y^0,\bar  y)$  be an $L^\infty$-local minimizer 
for  the space-time problem $(P^e)$. Then
%, for every  Boltyanskii approximating cone  $\Gamma$    of the target $\cS$ at $(\bar y^0,\bar y)(\bar S)$, 
there exists  a multiplier   $(p_0, p,\lambda)\in \cR\times AC\left([0,\bar S],\cR^{ n}\right)\times [0,+\infty)$ such that the following conditions hold true:
 
\begin{itemize}
\item[(i)] {\sc (non-triviality)} 
\begin{equation} 
\label{fe1}
(p_0, p , \lambda) \not= (0, 0,0) \,;
\end{equation}
%Furthermore, if the trajectory $\bar y $ is not instantaneous, namely, if 
%   $ \bar y^0(\bar S)>0$, then \eqref{fe1} can be strengthened  to 
%\begin{equation}\label{strongfe1}
%  (p  , \lambda) \not= (0,0). 
%  \end{equation}
  \item[(ii)] {\sc (non-tranversality)}  
\begin{equation}\label{fe4}
 (p_0,p(\bar S)) \in -\lambda D\Psi\Big(( \bar y^0, \bar y)(\bar S)\Big)- N_{( \bar y^0, \bar y)(\bar S)}\cS;
 \eeq
\item[(iii)] {\sc (adjoint equation)}  the path $p$ solves on $[0,\bar S]$  the adjoint equation
\begin{equation}
\label{fe2}
\displaystyle  \frac{dp}{ds} =\,- p \cdot \bigg( Df  ( \bar y )) \bar w^0 +  \sum_{i=1}^{m}Dg_{i} (\bar y) \bar w^i\bigg);
\end{equation} 
\item[(iv)] {\sc (First order maximization)} 
 for  a. e. $s\in [0,\bar S]$,  one has
\begin{equation}\label{fe3}
\begin{array}{l}
H\Big(\bar y(s), p_0 , p(s) , \lambda,\bar w^0(s),\bar w(s) \Big)=  \\
 \qquad\qquad\qquad\qquad\qquad {\bf H}\Big(\bar y(s), p_0 , p(s), \lambda\Big);
% \qquad\qquad\qquad\qquad \ds \max_{(w^0,w,a)\in \mathcal{W}_+(\C)\times A}
%H\Big(\bar y(s), p_0 , p(s),\pi, \lambda,w^0, w,a\Big).
%\\
% \qquad\qquad\qquad\qquad \qquad\qquad\qquad\qquad \qquad\qquad\Bigg(= {\bf H}\Big(\bar y(s), p_0(s), p(s),\lambda\Big)\Bigg)
\end{array}
  %\left[p_0(s)w^0 + p(s)\cdot\Big(f( \bar y(s),\bar\alpha(s)) w^0 +  \sum_{i=1}^{m}  g_{i}(\bar y(s)) w^i\Big) + \pi | w|+\lambda {\ell^e}(\bar y(s), w) \right]
\eeq
\item[(v)] {\sc (Vanishing of Hamiltonians)} 
% For  every   $s\in[0,\bar S]$, one has 
\bel{engine}
 {\bf H}\Big(\bar y(s), p_0 , p(s),  \lambda\Big)=0 \qquad  \text{for all } s\in [0,\bar S];
\eeq
 \bel{pg0hi}
 {\bf H}_{g_i}(\bar y(s),p(s))=0 \   \text{for all }  s\in [0,\bar S], \  i=1,\dots,m;
 \eeq
 \item[(vi)] {\sc (Vanishing of higher order Hamiltonians)} 
\bel{pg000hi}
  {\bf H}_{B}(\bar y(s),p(s))= 0   \qquad\text{for all } s\in [0,\bar S], 
  \eeq
%  \bel{pg000hif}
%  {\bf H}_{[f,B]}(\bar y(s),p(s))\bar w^0(s)= 0   \quad\text{a.e.  } \ s\in [0,\bar S], 
%  \eeq
  for every iterated bracket $B$ of the vector fields $g_1,\dots,g_m$.
\end{itemize}
 Furthermore, if the trajectory $\bar y $ is not instantaneous, namely, if 
   $ \bar y^0(\bar S)>0$, then \eqref{fe1} can be strengthened  to 
\begin{equation}\label{strongfe1}
  (p  , \lambda) \not= (0,0)\,\,. 
  \end{equation}
\end{theorem}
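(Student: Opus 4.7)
The plan is to combine a classical free-time Pontryagin Maximum Principle applied to the space-time problem $(P^e)$ with an ad hoc family of impulsive needle variations designed to produce infinitesimal displacements along arbitrary iterated Lie brackets of the non-drift fields $g_1,\dots,g_m$. The first-order items (i)--(v) will follow from a standard PMP once the peculiar structure of $\mathcal{W}$ is exploited; item (vi), and the refinement \eqref{strongfe1} in the non-instantaneous case, requires the extra variational construction.

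First I would apply the classical PMP for Mayer problems with compact control set, free terminal time and mixed endpoint constraints to $(P^e)$. Since $\mathcal{W}$ is compact, the extended dynamics are $C^\infty$, and $\Psi,\varphi_i,\psi_j$ are $C^1$, this produces a non-trivial multiplier $(p_0,p,\lambda)$ with $\lambda\ge 0$, the adjoint equation \eqref{fe2} (observing that $\dot p_0=0$ because $H$ is independent of $y^0$), the non-transversality \eqref{fe4} and the maximization \eqref{fe3}. Autonomy of the extended system in $s$ together with the fact that $\bar S$ itself is part of the minimization yields that the maximized Hamiltonian is constant and vanishes along $\bar y$, which is \eqref{engine}. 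To extract \eqref{pg0hi}, I parameterize $\mathcal{W}$ by $(w^0,w)=(1-r,\,r u)$ with $r\in[0,1]$, $|u|=1$: the explicit maximization gives
\[
{\bf H}(\bar y,p_0,p,\lambda)=\max\!\left(p_0+p\cdot f(\bar y),\ \sqrt{\textstyle\sum_i (p\cdot g_i(\bar y))^2}\right),
\]
and since the right argument of the maximum is non-negative, \eqref{engine} forces it to vanish identically on $[0,\bar S]$, which is precisely \eqref{pg0hi}.

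For the bracket conditions \eqref{pg000hi}, the idea is to exploit the ``free room'' available in $\mathcal{W}$ on the face $\{w^0=0,\ |w|=1\}$: inserting chunks of control with $w^0\equiv 0$ modifies $y$ but leaves $y^0$ (i.e.\ $t$) untouched, so every such insertion is admissible and invisible in the original time variable. Given $\bar s\in(0,\bar S)$ and a candidate iterated bracket $B$, I would build a one-parameter family $(\hat w^0_\epsilon,\hat w_\epsilon)$ that coincides with $(\bar w^0,\bar w)$ outside a shrinking interval $[\bar s,\bar s+\tau(\epsilon)]$ and on that interval is a Chen--Fliess type concatenation of pure-impulse chunks with $w^0\equiv 0$ and $w$ piecewise constant on $\{|w|=1\}$, whose ordering and signs are chosen via a Baker--Campbell--Hausdorff expansion so that
\[
\hat y_\epsilon(\bar s+\tau(\epsilon))=\bar y(\bar s)+\epsilon\,B(\bar y(\bar s))+o(\epsilon).
\]
Propagating this endpoint perturbation up to $\bar S$ by the linearized system and pairing with $p$ via \eqref{fe4}--\eqref{fe2} produces the first-order inequality $p(\bar s)\cdot B(\bar y(\bar s))\le 0$; symmetrizing the signs in the concatenation reverses the inequality and yields equality. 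Since $\bar s$ is arbitrary, \eqref{pg000hi} follows.

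Finally, to upgrade \eqref{fe1} to \eqref{strongfe1} when $\bar y^0(\bar S)>0$, I argue by contradiction: if $p\equiv 0$ and $\lambda=0$, then \eqref{engine} reduces to $p_0\bar w^0=0$ a.e., while $\bar y^0(\bar S)=\int_0^{\bar S}\bar w^0\,ds>0$ forces $\bar w^0>0$ on a set of positive measure, hence $p_0=0$, contradicting \eqref{fe1}. The main obstacle of the whole argument is the rigorous construction of the bracket-generating variation for a bracket of arbitrary nesting depth: one has to control the $o(\epsilon)$ remainder of a multi-scale Chen series uniformly in the depth of $B$ and then convert a set-valued first-order approximation of the reachable set into the pointwise vanishing \eqref{pg000hi} via a convex-separation/open-mapping argument. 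This is precisely the technical core of the proof, which the authors defer to \cite{AMR19}.
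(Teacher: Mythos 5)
Your proposal is correct and follows essentially the same route as the paper, which itself only cites \cite{MRV18} for the first-order conditions (i)--(v) and the strengthened non-triviality \eqref{strongfe1}, and defers the higher-order condition (vi) to \cite{AMR19} with exactly the hint you develop: instantaneous Lie-bracket approximations built from pure-impulse arcs with $w^0\equiv 0$. Your explicit computation of the maximized Hamiltonian yielding \eqref{pg0hi}, and your contradiction argument for \eqref{strongfe1}, correctly fill in the elementary steps the paper leaves to the references, and you rightly identify the separation/open-mapping step ensuring a single common multiplier as the deferred technical core.
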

\vsm
The existence of a  multiplier verifying the first order conditions (i)--(v) and   of the  strengthened non-triviality condition \eqref{strongfe1}  has been already proved in \cite{MRV18}  as  direct  consequence  of the standard Maximum Principle.
Instead, the fact that the same multiplier verifies the higher order relations in  (vi)
needs a proof that exceeds the space limits of  the present paper. A proof of a stronger version of this theorem, including very low regularity assumptions on both the vector fields and the target,  can be found  in  \cite{AMR19}.
\vsm
\begin{remark}\label{Rext} {\rm The higher order condition \eqref{pg000hi} in (vi)
has been obtained in  \cite{ChiSt16} for the special case of non-impulsive (but unbounded) optimal time trajectories. We are able to prove it for possibly impulsive trajectories due to the fact that one can construct {\it instantaneous} approximations of Lie brackets. Incidentally let us remark that, unlike what is done in \cite{ChiSt16},   we do not assume the constancy of the rank of the Lie Algebra generated by $g_1,\ldots,g_{m}$. Actually, in the result proved in \cite{AMR19}
we do not even assume that the vector fields $g_1,\ldots,g_{m}$ are $C^\infty$: the only regularity required is the continuity of the involved Lie bracket $B$.} 
\end{remark}
\vsm
We get immediately further higher order conditions involving the drift $f$ as well.
\begin{corollary}\label{COR}
 Let  $(\bar S,\bar w^0,\bar w,  \bar y^0,\bar  y)$  be an $L^\infty$-local minimizer 
for  the space-time problem $(P^e)$. Then
%, for every  Boltyanskii approximating cone  $\Gamma$    of the target $\cS$ at $(\bar y^0,\bar y)(\bar S)$, 
there exists  a multiplier   $(p_0, p,\lambda)\in \cR\times AC\left([0,\bar S],\cR^{ n}\right)\times [0,+\infty)$ such that, besides verifying conditions 
conditions (i)-(vi)  of Theorem \ref{PMPeho} makes the relation 
  \bel{pg000hif}
  {\bf H}_{[f,B]}(\bar y(s),p(s))\bar w^0(s)= 0   \quad\text{a.e.  } \ s\in [0,\bar S], 
  \eeq
 hold true  for every iterated bracket $B$ of the vector fields $g_1,\dots,g_m$.
\end{corollary}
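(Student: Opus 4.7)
The plan is to exploit the fact that (vi) of Theorem~\ref{PMPeho} asserts that $s\mapsto {\bf H}_B(\bar y(s),p(s))$ is identically zero on $[0,\bar S]$, hence its (Carath\'eodory) derivative vanishes almost everywhere. Set
\[
\phi_B(s) := p(s)\cdot B(\bar y(s)).
\]
First I would differentiate $\phi_B$ using the state equation \eqref{extended} and the adjoint equation \eqref{fe2}. Writing things out, with the convention $[F_1,F_2]=DF_2\cdot F_1-DF_1\cdot F_2$ from the preliminaries,
\begin{equation*}
\begin{split}
\frac{d\phi_B}{ds} \,=\,
&\ -\,p\cdot\Bigl(Df(\bar y)\bar w^0+\sum_{i=1}^{m}Dg_i(\bar y)\bar w^i\Bigr)B(\bar y) \\
&\ +\,p\cdot DB(\bar y)\Bigl(f(\bar y)\bar w^0+\sum_{i=1}^{m}g_i(\bar y)\bar w^i\Bigr) \\
=\,&\ {\bf H}_{[f,B]}(\bar y,p)\,\bar w^0+\sum_{i=1}^{m}{\bf H}_{[g_i,B]}(\bar y,p)\,\bar w^i.
\end{split}
\end{equation*}

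Second, I would observe that for each $i\in\{1,\dots,m\}$ the vector field $[g_i,B]$ is itself an iterated Lie bracket of the family $g_1,\dots,g_m$, so relation \eqref{pg000hi} applied to $[g_i,B]$ gives ${\bf H}_{[g_i,B]}(\bar y(s),p(s))\equiv 0$. Combining this with the fact that $\phi_B\equiv 0$ forces $\dot\phi_B=0$ a.e., the previous display collapses to
\[
0 \,=\, {\bf H}_{[f,B]}(\bar y(s),p(s))\,\bar w^0(s)\qquad\text{for a.e. } s\in[0,\bar S],
\]
which is exactly \eqref{pg000hif}.

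There is no real obstacle here; the computation is a short consequence of (vi). The only mildly delicate point is making sure the differentiation of $\phi_B$ is legitimate: since $\bar y\in AC$, $p\in AC$, and $B\in C^1$ (because $g_1,\dots,g_m\in C^\infty$ and iterated brackets preserve smoothness), the map $\phi_B$ is absolutely continuous and its classical derivative exists almost everywhere, with the Leibniz rule above valid at all such points. Once that is verified, closure of the bracket family under further bracketing with the $g_i$'s immediately kills the non-drift sum and isolates the ${\bf H}_{[f,B]}\bar w^0$ term.
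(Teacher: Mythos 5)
Your argument is correct and coincides with the paper's own proof: both differentiate ${\bf H}_B(\bar y(s),p(s))\equiv 0$, use the adjoint and state equations to produce ${\bf H}_{[f,B]}\bar w^0+\sum_i{\bf H}_{[g_i,B]}\bar w^i$, and then invoke \eqref{pg000hi} applied to the iterated brackets $[g_i,B]$ to eliminate the non-drift sum. Your extra remark justifying the a.e.\ differentiation of the absolutely continuous map $\phi_B$ is a welcome precision the paper leaves implicit.
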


\begin{proof}
Condition  \eqref{pg000hif} can be obtained by simply differentiating   \eqref{pg000hi}  and  recalling  that the derivative of $p$ verifies \eqref{fe2}. Indeed, for a. e.  $s\in[0,\bar S]$ one has
$$
 \begin{array}{l}\ds \frac{d }{ds}\left(p(s)\cdot B(\bar y(s))\right)    \\ [1.5ex]
 \qquad\qquad\ds=\frac{d p}{ds}(s)\cdot B(\bar y(s))+p(s)\cdot D\, B(\bar y(s)) \frac{d\bar y}{ds}(s)  \\ [1.5ex]
%=- p(s)\cdot \bigg( \frac{\partial f}{\partial x}  ( \bar y(s)) \bar w^0(s) +  \sum_{i=1}^{m} \frac{\partial g_{i}}{\partial x} (\bar y(s)) \bar w^i(s)\bigg)\cdot B(\bar y(s))  \\ [1.5ex]
%  +p(s)\cdot \frac{\partial B}{\partial x}(\bar y(s))\bigg(f_{\bar\alpha(s)}( \bar y(s)) \bar w^0(s)  +  \sum_{i=1}^{m}g_{i}(\bar y(s)) \bar w^i(s))\bigg)   \\ [1.5ex] 
 \qquad\qquad = p(s)\cdot  \bigg([f,B]( \bar y(s)) \bar w^0(s)+ \ds  \sum_{i=1}^{m} [g_i,B] (\bar y(s))\bigg)   \\ [1.5ex]
 \qquad\qquad =p(s)\cdot  [f,B]( \bar y(s)) \bar w^0(s)=0.
 \end{array}
  $$
\end{proof} 
 
%%%%%%%%%%%%%%%%%%%%%%%%%%%%%%%%%%%%%%%%%%%%%%%%%%%%%%%%%%%%%%%%%

 \section{AN EXAMPLE}\label{SectionExample}
This example  shows how higher order necessary conditions may be useful to rule out the optimality of a space-time process  for which there exists a multiplier verifying all the  first order conditions (i)--(v) of Theorem \ref{PMPeho}, namely, the usual maximum principle.
\vsm
Consider the problem
\begin{equation}\label{exampleP}
\left\{
\begin{array}{l}
\;\;\;\;\;\;\;\;\;\;\;\;\mbox{Minimize }\  \Psi(x(1)),
\\ [1.5ex]
\ds\mbox{over  } \ (u,x):[0,1]\to \cR^{2}\times\cR^{5} \   \mbox{s. t. }  \\ [1.5ex]
\displaystyle\frac{dx}{dt} \,=\, f(x) + \sum_{i=1}^{2}g_{i}(x)u^i   \  \mbox{ a.e. } t \in [0,1],  \\ [1.5ex]
x(0) = (1,0,0,0,0), \quad  x(1) \in \cS,
\end{array}
\right.
\end{equation}
in which   $\Psi(x):=x^3+x^4$, 
$$
\cS:=\{(x^1,\dots,x^5)\in\cR^5: \ x^1=x^2=0, \ x^3\ge-1\}, 
$$
$$
\begin{array}{l}
\ds f(x): = \frac12\Big( (x^2)^2 + (x^3)^2+(1-x^1-x^5)^2\Big) \frac{\partial\ }{\partial x^4}+ \frac{\partial\ }{\partial x^5},  \\ [1.5ex]
\ds g_1(x):= \frac{\partial\ }{\partial x^1} -\frac12 (x^2)^2\frac{\partial\ }{\partial x^3}, \qquad  
g_2(x):=\frac{\partial\ }{\partial x^2}.
\end{array}
$$ 
The corresponding space-time problem reads
\begin{equation}\label{exampleSTS}
\left\{
\begin{array}{l}
\;\;\;\;\;\;\;\;\;\;\;\;\mbox{Minimize }\  \Psi(y(S)),
\\ [1.5ex]
\ds\mbox{over  } S>0, \ (w^0,w,y^0,y):[0,S]\to \cR^{9} \   \mbox{s. t. }  \\ [1.5ex]
\displaystyle \frac{dy^0}{ds} = w^0,\\ [1.5ex]
\displaystyle \frac{dy}{ds} = f(y)w^0 + \sum_{i=1}^2 g_i(y) w^i  \  \mbox{ a. e. } s \in [0,S],  \\ [1.5ex]
y^0(0)=0,    \ y(0)=(1,0,0,0,0),\     (y^0 , y)(S) \in \{1\}\times \cS.
\end{array}
\right.
\end{equation}

Let us  consider  the feasible  space-time  process $(\hat S, \hat w^0,\hat w,\hat y^0,\hat y)$, where $\hat S=\sqrt{2}$, 
$$
 (\hat  w^0,\hat {w}^1, \hat {w}^2)= \left(\frac{\sqrt{2}}{2},-\frac{\sqrt{2}}{2},0\right),
 % \quad  \text{on } \  [0,\sqrt{2}], 
 $$
 and
 $$
 (\hat y^0,\hat y^1,\dots, \hat y^5)=  \left(\frac{\sqrt{2}\,s }{2},1-\frac{\sqrt{2}\,s}{2},0,0,0,\frac{\sqrt{2}\,s}{2}\right) 
% \   \text{on } [0,\sqrt{2}]   \\ 
$$
on $[0,\sqrt{2}]$. 
%The process  $(\hat S, \hat w^0,\hat w,\hat y^0,\hat y)$ is feasible,  since $\hat y(\sqrt{2})=(0,0,0,0,1)\in\cS$.
 In the sequel,  we prove that $(\hat S, \hat w^0,\hat w,\hat y^0,\hat y)$ is an {\it extremal} for the space-time problem \eqref{exampleSTS} --that is, it verifies conditions (i)--(v) of Theorem \ref{PMPeho} for some  multiplier--    but there is no non-zero multiplier  for which all the necessary conditions in Theorem \ref{PMPeho} are met. 
 
% The unmaximized Hamiltonian reads
%$$
%H(=
%p_0 w^0+ p_1 w^1 + p_2 w^2
%-\frac{1}{2}p_3 (y^2)^2 w^1 +\frac12 p_4\Big((y^2)^2 + (y^3)^2 + (1-y^1-y^5)^2\Big) w^0,
%$$
%Choose 
%$$
%\Gamma:=\{x\in\cR^5: \ \ x^1=x^2=0\}
%$$
% as Boltyanskii approximating  cone  to $\cS$ at the final point $\hat y(\sqrt{2})=(0,0,0,0,1)$.
 
The adjoint equation and the non-transversality condition read
$$
\left\{
\begin{array}{l}
\ds\frac{dp_0}{ds} = 0, \\
\ds\frac{dp_1}{ds} = p_4(1-\hat y^1-\hat y^5) \hat{w}^0 \\
\ds\frac{dp_2}{ds} =p_3 \hat y^2  \hat{w}^1-p_4 \hat y^2  \hat{w}^0 \\
\ds\frac{dp_3}{ds} =-p_4 \hat y^3  \hat{w}^0 \\
\ds\frac{dp_4}{ds} =0 \\ 
\ds\frac{dp_5}{ds} =p_4(1-\hat y^1-\hat y^5) \hat{w}^0 \\ 
\end{array}\right.\qquad \text{a.e. } s\in[0,\sqrt{2}],
$$
$$
\begin{array}{l}
(p_0,p(\sqrt{2}))
% \in - \lambda D\Psi (\hat y(\sqrt{2}))-   \cR\times\Gamma^\perp \\ [1.5ex]\qquad\qquad\qquad  
= - \lambda (0,0,0,1,1,0) - \cR^3 \times  \{(0,0,0)\} ,
 \end{array}
$$
with $\lambda\geq 0$.  Therefore,   $p_0$, $p_1$, and $p_3$ are arbitrary real constants, while $p_3=p_4=-\lambda$  and $p_5=0$.   Moreover, by (the first order conditions)   \eqref{fe3} and \eqref{engine} we get  
$$
p_0\hat{w}^0+p_1\hat{w}^1+p_2\hat{w}^2=p_0\,\frac{\sqrt{2}}{2}-p_1\,\frac{\sqrt{2}}{2}=0
$$
and
$$
p_0 {w}^0+p_1 {w}^1+p_2 {w}^2\le0  
$$
for all $(w^0,w^1,w^2)\in[0,+\infty)\times\cR^2$ with $w^0+|w|=1$. 
%Therefore, $p_0=p_1$  and by choosing first $(w^0,w)=(0,0,\pm1)$ and then $(w^0,w)=(0,\pm1, 0)$ in the last  inequality, one obtains that $p_0=p_1=p_2=0$. 
Hence there exists a non-trivial multiplier $(p_0,p,\lambda)$,   necessarily of the form
$$
 (p_0,p,\lambda)=(0,0,0,-\lambda, -\lambda, 0, \lambda), \qquad \lambda>0.
 $$
However, since 
$$
\big[[g_1,g_2],g_2\big] = -\frac{\partial\ }{\partial x^3},
$$
the higher-order condition \eqref{pg000hi} implies $p_3\equiv0,$ thus $\lambda=0,$ and then $(p_0,p,\lambda)=(0,0,0)$.  Therefore, we can   conclude that  $(\hat S, \hat w^0,\hat w,\hat y^0,\hat y)$ is not a minimizer, since there is no non-zero  multiplier verifying the higher order necessary conditions in Theorem \ref{PMPeho}.

Actually,  it is not difficult to see that the space-time process $(\bar S, \bar w^0,\bar w, \bar y^0,\bar y)$, where $\bar S=\sqrt{2}+4\sqrt[3]{2}$, 
$$
I_0:=[0,\sqrt{2}], \  I_i:=\sqrt{2}+((j-1)\sqrt[3]{2},j\sqrt[3]{2}], \ \  j=1,\dots,4,
$$
$$
 (\bar w^0, \bar{w}^1, \bar{w}^2)= \left\{
\begin{array}{l}
\left(\frac{\sqrt{2}}{2},-\frac{\sqrt{2}}{2},0\right) \qquad    I_0\\  
 (0,-1,0) \qquad \qquad    I_1  \\ 
(0,0,-1)  \qquad \qquad   I_2\\
 (0,1,0)  \qquad \qquad  \ \   I_3 \\ 
(0,0,1) \qquad \qquad  \ \   I_4 
\end{array}
\right.
$$
and
\,

$
(\bar y^0,\bar y^1,\dots, \bar y^5) =
$
% \\ [1.5ex]
$$
 {\small \left\{
\begin{array}{l}
\ds\left(\frac{\sqrt{2}}{2}\,s,1-\frac{\sqrt{2}}{2}\,s,0,0,0,\frac{\sqrt{2}}{2}\,s\right) \qquad\qquad \qquad\qquad\qquad \  I_0\\
\ds \left(1,\sqrt{2}-s,0,0,0,1\right)\qquad\qquad \qquad\qquad\qquad\qquad\qquad\ \  \ \, I_1 \\
\ds\left(1,-\sqrt[3]{2}, \sqrt{2}+\sqrt[3]{2}-s,0,0,1\right) \qquad\qquad \qquad\qquad\qquad  \ \  \, I_2\\
\ds\left(1, s-\sqrt{2}-3\sqrt[3]{2},-\sqrt[3]{2},\frac{(\sqrt[3]{2})^2}{2}(\sqrt{2}+2\sqrt[3]{2}-s) ,0,1\right)  \     I_3 \\
\ds \left(1, 0, s-\sqrt{2}-4\sqrt[3]{2},-1,0,1\right) \qquad\qquad \qquad\qquad\qquad \quad   \, I_4,
\end{array}
\right.}
%\end{array}
$$
is a global minimizer.  In particular,  this  process steers the state $(1,0,0,0,0)$ to $(0,0,0,0,1)$ by a uniform rectilinear  motion 
 until time $t=1$. After that, the state jumps instantly to $(0,0,-1,0,1)$. 
 
 The non-transversality condition \eqref{fe4} now reads
$$
\begin{array}{l}
\ds \left(p_0,p(\sqrt{2}+4\sqrt[3]{2})\right) \\
 \qquad\quad =- \lambda (0,0,0,1,1,0) - \cR^3 \times (-\infty,0]\times \{(0,0)\},
 \end{array}
$$
with $\lambda\ge0$.  This yields 
$$
\ds \left(p_0,p(\sqrt{2}+4\sqrt[3]{2})\right)=(c_0,c_1,c_2,-\lambda+c_3,-\lambda,0)
$$
with $c_0$, $c_1$, $c_2\in\cR$ and $c_3\ge0$. Choosing $c_0=c_1=c_2=0$, $c_3=1$ and $\lambda=1$, we get the non-trivial,  constant multiplier 
$$
(p_0,p_1,p_2,p_3,p_4,p_5,\lambda)=(0,0,0,0,-1,0,1),
$$
which  satisfies all the necessary conditions in Theorem \ref{PMPeho} (and  agrees with the strengthened non-triviality condition \eqref{strongfe1}, i.e. $(p,\lambda)\ne(0,0)$,  that is in force,  for $\bar y^0(\sqrt{2}+4\sqrt[3]{2})=1>0$). In particular, the higher order condition  \eqref{pg000hi} is trivially verified, since the vector fields $g_1$, $g_2$, and all the elements of the Lie algebra generated by $\{g_1,g_2\}$ have
  the fourth component equal to zero.

\addtolength{\textheight}{-12cm}   
 
%%%%%%%%%%%%%%%%%%%%%%%%%%%%%%%%%%%%%%%%%%%%%%%%%%%%%%%%%%%%%%%%%%%%%%%%%%%%%%%%

\end{document}